\renewcommand{\@seccntformat}[1]{{\csname the#1\endcsname}.\hspace{.5em}}
\newtheorem{theorem}{Theorem}[section]
\newtheorem{corollary}[theorem]{Corollary}
\newtheorem{lemma}[theorem]{Lemma}
\renewcommand{\thefootnote}{*}
\numberwithin{equation}{section}
\begin{document}

\begin{center}
{\large\bf  $q$-Supercongruences for multidimensional series modulo
the sixth power of a cyclotomic polynomial }
\end{center}

\vskip 2mm \centerline{Chuanan Wei }
\begin{center}
{\footnotesize School of Biomedical Information and Engineering,\\
  Hainan Medical University, Haikou 571199, China\\

{\tt Email address:weichuanan78@163.com } }
\end{center}

%%date: January 4, 2011
%\vskip 5mm
%\noindent {\it Suggested Running title}: Two Identities of Gould

\vskip 0.7cm \noindent{\bf Abstract.} With the help of El
Bachraoui's lemma, the creative microscoping method, and a new form
of the Chinese remainder theorem for coprime polynomials, we prove a
$q$-supercongruence for double series and a $q$-supercongruence for
triple series modulo the sixth power of a cyclotomic polynomial. As
conclusions, two corresponding supercongruences for double and
triple series, which are associated with the (D.2) supercongruence
of Van Hamme, are given.

\vskip 3mm \noindent {\it Keywords}: $q$-supercongruence; creative
microscoping method; Chinese remainder theorem for coprime
polynomials; the transformation formula between two $ _{8}\phi_{7}$
series; Jackon's $_8\phi_7$ summation formula

\vskip 0.2cm \noindent{\it AMS Subject Classifications}: 11A07, 11B65

\renewcommand{\thefootnote}{**}

\section{Introduction}
For a complex number $x$ and a nonnegative integer $n$, define the
shifted-factorial as
\[(x)_{n}=\Gamma(x+n)/\Gamma(x),\]
where $\Gamma(x)$ is the usual Gamma function.  For convenience, let
$p$ stand for any odd prime throughout the paper. In 1997, Van Hamme
\cite[(D. 2)]{Hamme} displayed the  interesting conjecture: for
$p\equiv 1\pmod 6$,
\begin{equation}\label{vanhamme}
\sum_{k=0}^{(p-1)/3}(6k+1)\frac{(\frac{1}{3})_k^6}{(1)_k^6}\equiv
 -p\,\Gamma_p(\tfrac{1}{3})^9 \pmod{p^4}.
\end{equation}
Nineteen years later, Long and Ramakrishna \cite[Theorem 2]{LR}
verified the following generalization of \eqref{vanhamme}:
\begin{equation}\label{eq:long}
\sum_{k=0}^{p-1}(6k+1)\frac{(\frac{1}{3})_k^6}{(1)_k^6}\equiv
\begin{cases}\displaystyle -p\,\Gamma_p(\tfrac{1}{3})^9  \pmod{p^6}, &\text{if $p\equiv
1\pmod 6$,}
\\[7pt]
\displaystyle -\frac{10}{27}p^4\,\Gamma_p(\tfrac{1}{3})^9\pmod{p^6},
&\text{if $p\equiv 5\pmod 6$.}
\end{cases}
\end{equation}
 Some results
and conjectures related to \eqref{eq:long} can be seen in Guo, Liu,
and Schlosser \cite{GLS}.

For two complex numbers $x$ and $q$ with $|q|<1$ and a nonnegative
integer $n$, define the $q$-shifted factorial
 to be
  \begin{equation*}
(x;q)_{\infty}=\prod_{k=1}^{\infty}(1-xq^k)\quad\text{and}\quad
(x;q)_n=\frac{(x;q)_{\infty}}{(xq^n;q)_{\infty}}.
 \end{equation*}
For simplicity, we sometimes use the compact notation
\begin{equation*}
(x_1,x_2,\dots,x_m;q)_{t}=(x_1;q)_{t}(x_2;q)_{t}\cdots(x_m;q)_{t},
 \end{equation*}
where $m\in\mathbb{Z}^{+}$ and $t\in\mathbb{Z}^{+}\cup\{0,\infty\}.$
In 2021, Guo and Schlosser \cite[Theorem 2.3]{GS} found a partial
$q$-analogue of \eqref{eq:long}:  for any positive integer $n$,
\begin{equation*}
\sum_{k=0}^{n-1}[6k+1]\frac{(q;q^3)_k^6}{(q^3;q^3)_k^6}q^{3k}\equiv
\begin{cases} \displaystyle 0  \pmod{[n]}, &\text{if $n\equiv 1\pmod 3$,}\\[10pt]
\displaystyle  0 \pmod{[n]\Phi_n(q)}, &\text{if $n\equiv 2\pmod 3$,}
\end{cases}
\end{equation*}
where $[s]$ is the $q$-integer $(1-q^s)/(1-q)$ and $\Phi_n(q)$
denotes the $n$-th cyclotomic polynomial in $q$. In terms of the set
of $q$-congruence relations:
\begin{align}
&\frac{(1-bq^{tn})(b-q^{tn})(-1-a^2+aq^{tn})}{(a-b)(1-ab)}\equiv1\pmod{(1-aq^{tn})(a-q^{tn})},
\label{relations-a}\\[5pt]\label{relations-b}
&\frac{(1-aq^{tn})(a-q^{tn})(-1-b^2+bq^{tn})}{(b-a)(1-ba)}\equiv1\pmod{(1-bq^{tn})(b-q^{tn})},
\end{align}
where $t\in\{1,2\}$, and some other tools, the author \cite[Theorems
1.1 and 1.2]{Wei-a} catched hold of the following stronger
conclusions: for any positive integer $n\equiv 1\pmod 3$,
\begin{align}
\sum_{k=0}^{(n-1)/3}[6k+1]\frac{(q;q^3)_k^6}{(q^3;q^3)_k^6}q^{3k}
&\equiv[n] \frac{(q^2;q^3)_{(n-1)/3}^3}{(q^3;q^3)_{(n-1)/3}^3}
\notag\\&\:\hspace{-30mm}\times\:
\bigg\{1+[n]^2(2-q^{n})\sum_{r=1}^{(n-1)/3}\bigg(\frac{q^{3r-1}}{[3r-1]^2}-\frac{q^{3r}}{[3r]^2}\bigg)\bigg\}
\pmod{[n]\Phi_n(q)^4}, \label{wei-a}
\end{align}
and for any positive integer $n\equiv 2\pmod 3$,
\begin{align}
\sum_{k=0}^{M}[6k+1]\frac{(q;q^3)_k^6}{(q^3;q^3)_k^6}q^{3k} &\equiv
5[2n]
\frac{(q^2;q^3)_{(2n-1)/3}^3}{(q^3;q^3)_{(2n-1)/3}^3}\pmod{[n]\Phi_n(q)^5}.
 \label{wei-b}
\end{align}
It is clear that \eqref{wei-a} is still a partial $q$-analogue of
the first case in \eqref{eq:long}, although \eqref{wei-b} has been a
complete $q$-analogue of the second case in \eqref{eq:long}.

In 2021, El Bachraoui \cite{Mohamed} discovered  a few
$q$-supercongruences for double series including the result: for any
positive integer $n$ with $\gcd(n,6)=1$,
\begin{equation*}
\sum_{k=0}^{n-1}\sum_{j=0}^{k}A_q(j)A_q(k-j)\equiv q[n]^2
\pmod{[n]\Phi_n(q)^2},
\end{equation*}
where
\begin{align*}
A_q(k)=[8k+1]\frac{(q;q^2)_k^2
(q;q^2)_{2k}}{(q^2;q^2)_{2k}(q^6;q^6)_k^2}q^{2k^2}.
\end{align*}
Via the series rearrangement method, it is not difficult to
understand that the last formula can be written as the following
symmetric form: for $i,j\geq0$,
\begin{equation*}
\sum_{i+j\leq n-1}A_q(i)A_q(j)\equiv q[n]^2 \pmod{[n]\Phi_n(q)^2}.
\end{equation*}
 Further $q$-supercongruences for multidimensional series can be
seen in the papers \cite{Li,GuoLi,Mohamed-a,Wei-b}. For more
$q$-analogues of supercongruences, we refer the reader to the papers
\cite{Guo-a,Guo-b,Guo-c,GS-a,GuoZu-c,NW,LP,Wei-a}.

By introducing another set of $q$-congruence relations, which are
different from \eqref{relations-a} and \eqref{relations-b} and will
appear in the next section, we shall prove the following
$q$-supercongruence for double series modulo the sixth power of a
cyclotomic polynomial, which has been conjectured by the author and
Li \cite{Wei-b}.

\begin{theorem}\label{thm-a}
Let $n$ be a positive integer such that $n\equiv1\pmod{3}$ and
$$A_q(k)=[6k+1]\frac{(q;q^3)_k^6}{(q^3;q^3)_k^6}q^{3k}.$$
Then for $i,j\geq0$, modulo $\Phi_n(q)^6$,
\begin{align}
\sum_{i+j\leq n-1}A_q(i)A_q(j)&\equiv
[n]^2\frac{(q^2;q^3)_{(n-1)/3}^6}{(q^3;q^3)_{(n-1)/3}^6}
\notag\\
&\quad\times\:
\bigg\{1+2[n]^2(2-q^{n})\sum_{r=1}^{(n-1)/3}\bigg(\frac{q^{3r-1}}{[3r-1]^2}-\frac{q^{3r}}{[3r]^2}\bigg)\bigg\}.
\label{eq:thm-a}
\end{align}
\end{theorem}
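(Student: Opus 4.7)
The plan is to follow the creative-microscoping blueprint used in the author's derivation of \eqref{wei-a}, now adapted to the two-variable setting. Introduce two free parameters $a,b$ by deforming the summand to
\begin{equation*}
A_q(k;a,b)=[6k+1]\,\frac{(aq,q/a,bq,q/b,q,q;q^3)_k}{(aq^3,q^3/a,bq^3,q^3/b,q^3,q^3;q^3)_k}\,q^{3k},
\end{equation*}
so that $A_q(k;1,1)=A_q(k)$. The target is the parametric congruence
\begin{equation*}
\sum_{i+j\le n-1}A_q(i;a,b)\,A_q(j;a,b)\equiv\mathcal{R}(a,b)\pmod{(1-aq^n)(a-q^n)(1-bq^n)(b-q^n)\,\Phi_n(q)^2},
\end{equation*}
for an explicit $\mathcal{R}(a,b)$ whose specialization at $a=b=1$ is the right-hand side of \eqref{eq:thm-a}. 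In the limit $a,b\to 1$ each of the four linear factors $1-aq^n$, $a-q^n$, $1-bq^n$, $b-q^n$ contributes an extra $\Phi_n(q)$, boosting the modulus from $\Phi_n(q)^2$ to $\Phi_n(q)^6$.

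To verify the congruence on each of the four linear factors, I would specialize $a$ (or $b$) to $q^{\pm n}$. Then one of the shifted factorials in the numerator of $A_q(k;a,b)$ acquires a zero that truncates the inner sum, and after rewriting $\sum_{k}A_q(k;a,b)$ as a terminating very-well-poised $_8\phi_7$ series, Jackson's $_8\phi_7$ summation formula (or the $_8\phi_7$ transformation between two $_8\phi_7$ series) evaluates it in closed form. At this stage El Bachraoui's lemma enters to organize the convolution: by the symmetry $(i,j)\leftrightarrow(j,i)$ and a telescoping-type rearrangement, the lemma reduces $\sum_{i+j\le n-1}A_q(i;a,b)A_q(j;a,b)$ to an expression controlled by a single inner $_8\phi_7$ evaluation, which one then matches with $\mathcal{R}(a,b)$ modulo the relevant linear factor. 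The congruence modulo $\Phi_n(q)^2$ is obtained independently via the new set of $q$-congruence relations to be introduced in the next section, applied in tandem with the single-series result \eqref{wei-a}.

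These five pieces are then assembled by the new Chinese remainder theorem for coprime polynomials: viewed as polynomials in $a$ and $b$ over $\mathbb{Z}[q]$, the four linear factors and $\Phi_n(q)^2$ are pairwise coprime, so the individual congruences glue into a single congruence modulo their product. Sending $a,b\to 1$ yields \eqref{eq:thm-a}.

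The main obstacle, in my view, is the precise identification of $\mathcal{R}(a,b)$, in particular the correct $a,b$-generalization of the bracketed correction $1+2[n]^2(2-q^{n})\sum_{r=1}^{(n-1)/3}\bigl(q^{3r-1}/[3r-1]^2-q^{3r}/[3r]^2\bigr)$ appearing in \eqref{eq:thm-a}. Matching this factor with the output of Jackson's summation term by term is the technical heart of the argument and dictates which new $q$-congruence relations must be proved. A secondary difficulty is the book-keeping for the convolution constraint $i+j\le n-1$ in combination with the truncations at $a=q^{\pm n}$: one must verify that the upper limits on $i$ and $j$ are compatible with the terminating index of each $_8\phi_7$, which typically requires a short case analysis and is the step where the author's new form of the CRT is most likely to be essential.
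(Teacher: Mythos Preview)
Your high-level plan---parametric deformation, El Bachraoui's lemma to factor the convolution as a square, Jackson's summation at the specializations, CRT assembly, then send the parameters to $1$---is the paper's strategy in outline. But the execution differs in one structural respect, and your stated mechanism for the $\Phi_n(q)^2$ piece is not what actually happens.

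The paper does not stop at two parameters. It introduces a \emph{third} parameter $c$ via
\[
\beta_q(k)=[6k+1]\,\frac{(aq,q/a,q/b,q/c,bcq,q;q^3)_k}{(q^{3}/a,aq^{3},bq^{3},cq^{3},q^{3}/bc,q^3;q^3)_k}\,q^{3k},
\]
and proves congruences modulo the deliberately \emph{asymmetric} moduli $(a-q^n)(1-aq^n)$, $(b-q^n)(1-bcq^n)$, $(c-q^n)$, together with only the first power $\Phi_n(q)$ (the latter from the ${}_8\phi_7$ transformation giving an exact zero). The ``new $q$-congruence relations'' you invoke are precisely the explicit CRT idempotents for these three coprime polynomials (Lemma~\ref{lemma-b}); they are not a device for upgrading $\Phi_n(q)$ to $\Phi_n(q)^2$, and they are not used in tandem with \eqref{wei-a}. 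After the CRT glue, the paper sets $c=1$: the factor $(c-q^n)$ becomes $(1-q^n)$, which contributes the missing copy of $\Phi_n(q)$, while $(1-bcq^n)$ collapses to $(1-bq^n)$. This is how the modulus $\Phi_n(q)^2(a-q^n)(1-aq^n)(b-q^n)(1-bq^n)$ of Theorem~\ref{thm-c} is reached, and simultaneously the factor $(q^4;q^3)_{(n-1)/3}^2$ on the right supplies the visible $[n]^2$. One then sets $b=1$ (two more copies of $\Phi_n(q)$) and finally $a\to1$ via L'H\^{o}pital.

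So the concrete gap in your plan is the $\Phi_n(q)^2$ step: with only $a,b$, neither the paper's new relations nor the non-parametric result \eqref{wei-a} furnish a $\Phi_n(q)^2$ congruence for the \emph{parametric} double sum. A two-parameter route can in fact be rescued---since $B_q(k;a,b)\equiv0\pmod{\Phi_n(q)^2}$ for $(n-1)/3<k\le n-1$ and the single parametric sum is $\equiv0\pmod{\Phi_n(q)}$, its square is $\equiv0\pmod{\Phi_n(q)^2}$, and one then checks separately that the two-modulus CRT interpolant built from \eqref{relations-a}--\eqref{relations-b} already carries the factor $[n]^2$---but that is a different argument from the one you sketched, and the paper's third parameter $c$ with its asymmetric modulus is exactly the trick that makes this extra verification unnecessary.
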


Choosing $n=p$ and taking $q\to 1$ in Theorem \ref{thm-a}, we obtain
the following supercongruence associated with the first case in
\eqref{eq:long}.

\begin{corollary}\label{cor-a}
Let $p\equiv1\pmod{6}$ be a prime.  Then for $i,j\geq0$,
\begin{align*}
&\sum_{i+j\leq p-1}
(6i+1)(6j+1)\frac{(\frac{1}{3})_i^6\,(\frac{1}{3})_j^6}{(1)_i^6\,(1)_j^6}\\
&\quad\equiv\frac{(\frac{2}{3})_{(p-1)/3}^6}{(1)_{(p-1)/3}^6}
\bigg\{p^{2}+2p^{4}\sum_{r=1}^{(p-1)/3}\bigg(\frac{1}{(3r-1)^2}-\frac{1}{(3r)^2}\bigg)\bigg\}\pmod{p^{6}}.
\end{align*}
\end{corollary}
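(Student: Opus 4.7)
The corollary is a direct specialization of Theorem \ref{thm-a}, so the plan is simply to set $n=p$ and let $q\to 1$, then verify that each ingredient behaves as advertised. The hypothesis $p\equiv 1\pmod 6$ implies $p\equiv 1\pmod 3$, so Theorem \ref{thm-a} applies with $n=p$ and yields a $q$-congruence modulo $\Phi_p(q)^6$. Because $\Phi_p(1) = p$, writing the difference of the two sides of \eqref{eq:thm-a} as $\Phi_p(q)^6\,h(q)$ with $h(q)$ a rational function whose denominator is coprime to $\Phi_p(q)$, and then evaluating at $q=1$, picks up a factor of $\Phi_p(1)^6 = p^6$ and produces the desired congruence modulo $p^6$ once both sides are specialized.

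Next I would carry out the limits term by term. Using the elementary identity
\[\lim_{q\to 1}\frac{(q^a;q^3)_k}{(1-q)^k}=\prod_{j=0}^{k-1}(a+3j)=3^k\bigl(\tfrac{a}{3}\bigr)_k,\]
the factors $(1-q)^k$ cancel in each ratio, giving
\[\frac{(q;q^3)_k^6}{(q^3;q^3)_k^6}\longrightarrow\frac{(\frac{1}{3})_k^6}{(1)_k^6}\quad\text{and}\quad\frac{(q^2;q^3)_{(p-1)/3}^6}{(q^3;q^3)_{(p-1)/3}^6}\longrightarrow\frac{(\frac{2}{3})_{(p-1)/3}^6}{(1)_{(p-1)/3}^6}.\]
The remaining prefactors satisfy $[6k+1]q^{3k}\to 6k+1$, $[p]^2\to p^2$, $(2-q^p)\to 1$, and $q^{3r-1}/[3r-1]^2-q^{3r}/[3r]^2\to 1/(3r-1)^2-1/(3r)^2$. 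The outer $[p]^2$ combined with the inner $[p]^2$ accounts for the coefficient $p^4$ of the sum, while the outer $[p]^2$ acting on the leading $1$ in the brace produces the $p^2$. Assembling all limits in \eqref{eq:thm-a} reproduces the right-hand side of the corollary verbatim.

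The only step needing real verification is the harmlessness of the specialization $q\to 1$, that is, every denominator appearing in \eqref{eq:thm-a} should be a $p$-adic unit after the substitution. For the $q$-shifted factorials $(q^3;q^3)_k$ with $1\le k\le (p-1)/3$ and for the $q$-integers $[3r-1]$ and $[3r]$ with $1\le r\le (p-1)/3$, the relevant indices $3k$, $3r-1$, $3r$ all lie in $\{1,2,\dots,p-1\}$ and hence evaluate at $q=1$ to integers coprime to $p$; the same observation shows that these factors are coprime to $\Phi_p(q)$, so the $q$-congruence is a genuine identity in a ring over which $q=1$ can be imposed. No substantial obstacle arises, and the corollary drops out of Theorem \ref{thm-a} by this routine specialization.
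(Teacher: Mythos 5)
Your proposal is correct and follows exactly the route the paper intends: the paper derives Corollary \ref{cor-a} by the one-line specialization $n=p$, $q\to1$ in Theorem \ref{thm-a}, and your term-by-term limits (including $[p]^2\to p^2$, $(2-q^p)\to1$, and the cancellation of the $3^k$ factors in the $q$-shifted factorial ratios) together with the check that all denominators are coprime to $\Phi_p(q)$ and are $p$-adic units at $q=1$ supply precisely the routine details the paper omits.
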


Similarly, we shall also prove the following $q$-supercongruence for
triple series modulo the sixth power of a cyclotomic polynomial,
which was essentially conjectured by the author and Li \cite{Wei-b}.

\begin{theorem}\label{thm-aa}
Let $n$ be a positive integer subject to $n\equiv1\pmod{3}$. Then
for $i,j,k\geq0$, modulo $\Phi_n(q)^6$,
\begin{align}
\sum_{i+j+k\leq n-1}A_q(i)A_q(j)A_{q}(k)&\equiv
[n]^3\frac{(q^2;q^3)_{(n-1)/3}^9}{(q^3;q^3)_{(n-1)/3}^9}
\notag\\
&\quad\times\:
\bigg\{1+3[n]^2\sum_{r=1}^{(n-1)/3}\bigg(\frac{q^{3r-1}}{[3r-1]^2}-\frac{q^{3r}}{[3r]^2}\bigg)\bigg\},
\label{eq:thm-aa}
\end{align}
where $A_{q}(k)$ has been given in Theorem \ref{thm-a}.
\end{theorem}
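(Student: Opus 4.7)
The plan is to reduce the triple sum to a cube of single sums modulo $\Phi_n(q)^6$ and then apply the already-established congruence \eqref{wei-a}. For the reduction, I would first show that for $k$ in the range $(n-1)/3 < k \leq n-1$, the factor $(q;q^3)_k^6$ of $A_q(k)$ is divisible by $\Phi_n(q)^6$ while $(q^3;q^3)_k^6$ is $\Phi_n(q)$-coprime. The point is that $n \mid 3r+1$ for $0 \leq r < n$ has the unique solution $r=(n-1)/3$ (using $n \equiv 1\pmod 3$), so $(q;q^3)_k$ picks up exactly one factor of $\Phi_n(q)$; and $n\mid 3r$ for $1 \leq r < n$ has no solutions when $\gcd(n,3)=1$. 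Hence any triple in the sum with some index exceeding $(n-1)/3$ contributes zero modulo $\Phi_n(q)^6$, and on the remaining range $0 \leq i,j,k \leq (n-1)/3$ the side condition $i+j+k \leq n-1$ holds automatically, so
$$\sum_{i+j+k\leq n-1}A_q(i)A_q(j)A_q(k)\equiv\bigg(\sum_{k=0}^{(n-1)/3}A_q(k)\bigg)^{3}\pmod{\Phi_n(q)^6}.$$

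Next, I would cube \eqref{wei-a}. Set $R=[n](q^2;q^3)_{(n-1)/3}^3/(q^3;q^3)_{(n-1)/3}^3$ and $T=[n]^2(2-q^n)S$, where $S$ denotes the inner $r$-sum appearing in \eqref{wei-a}, so that \eqref{wei-a} reads $\sum_{k=0}^{(n-1)/3}A_q(k)=R(1+T)+E$ with $E\equiv 0\pmod{\Phi_n(q)^5}$ (since $\Phi_n(q)^5\mid[n]\Phi_n(q)^4$). A short cyclotomic check — using $n\equiv 1\pmod 3$ to see that both $(q^2;q^3)_{(n-1)/3}$ and $(q^3;q^3)_{(n-1)/3}$ are $\Phi_n(q)$-units — gives that $R$ has $\Phi_n(q)$-order exactly $1$ and $T$ has order exactly $2$. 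In the cube expansion every contribution with $\Phi_n(q)$-order at least $6$ can be dropped: the mixed terms $R^2E$, $RE^2$, $E^3$ all carry at least $\Phi_n(q)^7$, as do $R^3T^2$ and $R^3T^3$. Therefore
$$\bigg(\sum_{k=0}^{(n-1)/3}A_q(k)\bigg)^{3}\equiv R^3(1+3T)\pmod{\Phi_n(q)^6}.$$

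Finally, one cosmetic simplification remains. Because $R^3[n]^2$ already carries $\Phi_n(q)^5$ and $(2-q^n)-1=1-q^n$ is $\Phi_n(q)$-divisible, the factor $(2-q^n)$ sitting inside $3R^3T=3R^3[n]^2(2-q^n)S$ may be replaced by $1$ without changing the congruence. Substituting $R^3=[n]^3(q^2;q^3)_{(n-1)/3}^9/(q^3;q^3)_{(n-1)/3}^9$ then recovers the right-hand side of \eqref{eq:thm-aa} verbatim, which explains why the $(2-q^n)$ visible in \eqref{wei-a} and in \eqref{eq:thm-a} disappears in \eqref{eq:thm-aa}. The only non-mechanical point in this strategy is the opening reduction, namely the sharp $\Phi_n(q)^6$-divisibility of $A_q(k)$ in the out-of-range interval $(n-1)/3<k\leq n-1$, which is a careful bookkeeping of cyclotomic content in the $q$-shifted factorials. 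Once that is in place, the cubing and simplification are routine, and no new creative-microscoping or Chinese-remainder argument is required for Theorem \ref{thm-aa} beyond what is already invested in \eqref{wei-a}.
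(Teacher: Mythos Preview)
Your argument is correct and takes a genuinely different route from the paper's. The paper proves Theorem~\ref{thm-aa} by rerunning the full creative-microscoping machinery for the triple sum: it establishes a three-parameter analogue (Theorem~\ref{thm-d}) via Lemma~\ref{lemma-a}, Jackson's $_8\phi_7$ summation, Lemma~\ref{lemma-b} and the Chinese remainder theorem, then specializes $c=1$, $b=1$ and applies L'H\^{o}pital to let $a\to1$. In contrast, you bypass all of that by noting that $A_q(k)\equiv0\pmod{\Phi_n(q)^6}$ for $(n-1)/3<k\le n-1$, collapsing the constrained triple sum to a genuine cube, and then cubing the already-available single-sum congruence~\eqref{wei-a}. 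This is shorter and needs no new parametric input beyond~\eqref{wei-a}; what the paper's longer route buys is the intermediate parametric result Theorem~\ref{thm-d}, which your approach does not produce.

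One small imprecision: you assert that $T=[n]^2(2-q^n)S$ has $\Phi_n(q)$-order \emph{exactly} $2$, but all you have (and all you need) is order $\ge2$, since $S$ could in principle vanish modulo $\Phi_n(q)$. The estimates you actually use to discard $R^3T^2$, $R^3T^3$, and the $E$-cross terms require only the lower bounds $\mathrm{ord}_{\Phi_n(q)}R\ge1$, $\mathrm{ord}_{\Phi_n(q)}T\ge2$, $\mathrm{ord}_{\Phi_n(q)}E\ge5$, all of which you have verified.
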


Define the harmonic numbers of $2$-order by
\[H_{n}^{(2)}
  =\sum_{r=1}^n\frac{1}{r^{2}}.\]
It is ordinary to show that
 \begin{align}
\sum_{r=1}^{(p-1)/3}\bigg(\frac{1}{(3r-1)^2}-\frac{1}{(3r)^2}\bigg)&=H_{p-1}^{(2)}-\sum_{r=1}^{(p-1)/3}\frac{1}{(3r-2)^2}-\frac{2}{9}H_{(p-1)/3}^{(2)}
 \notag\\[5pt]
 &\equiv
-\sum_{r=1}^{(p-1)/3}\frac{1}{(3r-2)^2}-\frac{2}{9}H_{(p-1)/3}^{(2)}
\notag\\[5pt]
&=-\sum_{r=1}^{(p-1)/3}\frac{1}{(p-3r)^2}-\frac{2}{9}H_{(p-1)/3}^{(2)}
 \notag\\[5pt]
 &\equiv
-\frac{1}{3}H_{(p-1)/3}^{(2)}
 \pmod{p}. \label{harmonic}
\end{align}

Fixing $n=p$ and taking $q\to 1$ in Theorem \ref{thm-aa}, we get the
following supercongruence related to the first case in
\eqref{eq:long} after utilizing \eqref{harmonic}.

\begin{corollary}\label{cor-aa}
Let $p\equiv1\pmod{6}$ be a prime. Then for $i,j,k\geq0$,
\begin{align*}
&\sum_{i+j+k\leq p-1}
(6i+1)(6j+1)(6k+1)\frac{(\frac{1}{3})_i^6\,(\frac{1}{3})_j^6\,(\frac{1}{3})_k^6}{(1)_i^6\,(1)_j^6\,(1)_k^6}
\\
&\quad\:\:\equiv\frac{(\frac{2}{3})_{(p-1)/3}^9}{(1)_{(p-1)/3}^9}
\bigg\{p^{3}-p^{5}H_{(p-1)/3}^{(2)}\bigg\}\pmod{p^{6}}.
\end{align*}
\end{corollary}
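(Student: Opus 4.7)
The plan is to deduce the corollary by direct specialization of Theorem~\ref{thm-aa} with $n=p$ and $q\to 1$, then invoke the harmonic-sum identity \eqref{harmonic}. Since $p\equiv 1\pmod 6$ forces $p\equiv 1\pmod 3$, Theorem~\ref{thm-aa} applies and yields an identity modulo $\Phi_p(q)^6$. Because $\Phi_p(1)=p$, such a $q$-congruence specializes at $q=1$ to a numerical congruence modulo $p^6$, provided the rational function on each side is regular at $q=1$; this is the case here, since the only $q$-integers appearing in denominators are $[3r-1]$ and $[3r]$ with $1\le r\le (p-1)/3$, and both $3r-1$ and $3r$ are coprime to $p$ in this range.

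Using $\lim_{q\to 1}(1-q^a)/(1-q)=a$, the $q$-shifted factorial ratios collapse to ordinary Pochhammer ratios:
$$\lim_{q\to 1}\frac{(q;q^3)_k}{(q^3;q^3)_k}=\frac{(\tfrac13)_k}{(1)_k},\qquad \lim_{q\to 1}\frac{(q^2;q^3)_{(p-1)/3}}{(q^3;q^3)_{(p-1)/3}}=\frac{(\tfrac23)_{(p-1)/3}}{(1)_{(p-1)/3}}.$$
Thus $A_q(k)\to (6k+1)(\tfrac13)_k^6/(1)_k^6$, $[n]\to p$, and $q^{3r-c}/[3r-c]^2\to 1/(3r-c)^2$ for $c\in\{0,1\}$. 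The left-hand side of \eqref{eq:thm-aa} becomes the triple sum appearing in Corollary~\ref{cor-aa}, while the right-hand side becomes
$$\frac{(\tfrac23)_{(p-1)/3}^9}{(1)_{(p-1)/3}^9}\bigg\{p^3+3p^5\sum_{r=1}^{(p-1)/3}\bigg(\frac{1}{(3r-1)^2}-\frac{1}{(3r)^2}\bigg)\bigg\}\pmod{p^6}.$$

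Finally, by \eqref{harmonic} the inner sum is $\equiv -\tfrac13 H_{(p-1)/3}^{(2)}\pmod p$. Since it appears multiplied by the factor $3p^5$, this mod-$p$ evaluation already suffices to determine the expression modulo $p^6$: one obtains $3p^5\cdot(-\tfrac13 H_{(p-1)/3}^{(2)})=-p^5 H_{(p-1)/3}^{(2)}$, which produces exactly the right-hand side asserted in the corollary. The only non-mechanical point is justifying that the $q\to 1$ specialization of the $q$-congruence is legitimate; once that is settled, the statement follows by combining the two displayed observations with \eqref{harmonic}.
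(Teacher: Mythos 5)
Your proposal is correct and matches the paper's own derivation exactly: the paper obtains Corollary~\ref{cor-aa} by setting $n=p$ and letting $q\to 1$ in Theorem~\ref{thm-aa}, then invoking \eqref{harmonic} to replace $3p^5\sum_{r}\bigl(\tfrac{1}{(3r-1)^2}-\tfrac{1}{(3r)^2}\bigr)$ by $-p^5H_{(p-1)/3}^{(2)}$ modulo $p^6$. Your added care about the legitimacy of the $q\to1$ specialization (regularity at $q=1$, $\Phi_p(1)=p$) is a reasonable elaboration of a step the paper leaves implicit.
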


The structure of the paper is arranged as follows. By means of El
Bachraoui's lemma, the creative microscoping method from Guo and
Zudilin \cite{GuoZu-b},  and a new form of the Chinese remainder
theorem for coprime polynomials, we shall prove Theorem \ref{thm-a}
in Section 2. Similarly, the proof of Theorem \ref{thm-aa} will be
provided in Section 3.

\section{Proof of Theorem \ref{thm-a}}

For the aim of proving Theorem \ref{thm-a}, we demand the following
two lemmas, where Lemma \ref{lemma-a} is due to El Bachraoui
\cite{Mohamed-a} (see also \cite{WY-a}).

\begin{lemma}\label{lemma-a}
Let $m,n,t$ be positive integers with $m\geq t\geq2$ and
$n\equiv1\pmod{m}$. Let $\{\lambda(k)\}_{k=0}^\infty$ a complex
sequence. If $\lambda(k)=0$ for $(n+m-1)/m\leq k\leq n-1$, then
$$
\sum_{k_1+k_2+\cdots+k_t\leq
n-1}\lambda(k_1)\lambda(k_2)\cdots\lambda(k_t)
=\Bigg(\sum_{k=0}^{\frac{n-1}{m}}\lambda(k)\Bigg)^t.
$$
\end{lemma}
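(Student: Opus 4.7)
The strategy is in two stages: first use Lemma \ref{lemma-a} to collapse the double sum into the square of the single sum $\sum_{k=0}^{(n-1)/3}A_q(k)$, and then combine this with the author's earlier evaluation \eqref{wei-a} and square, tracking divisibility by $\Phi_n(q)$ with care.

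I would begin by checking that the truncation hypothesis of Lemma \ref{lemma-a} holds modulo $\Phi_n(q)^6$. For $(n+2)/3\le k\le n-1$ the factor $(q;q^3)_k^6$ picks up $(1-q^n)^6$ (since $n=3\cdot(n-1)/3+1$ lies among the exponents $3j+1$ with $0\le j\le k-1$), whereas $(q^3;q^3)_k^6$ is coprime to $\Phi_n(q)$ because $n\equiv 1\pmod 3$ excludes $n$ from the exponents $3j$. Hence $A_q(k)\equiv 0\pmod{\Phi_n(q)^6}$ on this range. Truncating $A_q(k)$ to zero there changes each term in the double sum by a multiple of $\Phi_n(q)^6$, so Lemma \ref{lemma-a} with $m=3$, $t=2$ applied to the truncated sequence produces
$$\sum_{i+j\le n-1}A_q(i)A_q(j)\equiv\left(\sum_{k=0}^{(n-1)/3}A_q(k)\right)^{\!2}\pmod{\Phi_n(q)^6}.$$

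Next I would invoke \eqref{wei-a}, which evaluates the inner sum modulo $[n]\Phi_n(q)^4$, i.e., modulo $\Phi_n(q)^5$ since $\Phi_n(q)\mid[n]$. Writing the right-hand side of \eqref{wei-a} as $[n]\,T$ with
$$T=\frac{(q^2;q^3)_{(n-1)/3}^3}{(q^3;q^3)_{(n-1)/3}^3}(1+X),\qquad X=[n]^2(2-q^n)\sum_{r=1}^{(n-1)/3}\left(\frac{q^{3r-1}}{[3r-1]^2}-\frac{q^{3r}}{[3r]^2}\right),$$
squaring produces $[n]^2T^2$ plus cross terms of the shape $2[n]T\cdot(\text{error})\cdot\Phi_n(q)^5$. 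The extra factor $\Phi_n(q)$ from $[n]$ pushes those cross terms into $\Phi_n(q)^6$. Expanding $(1+X)^2=1+2X+X^2$, the $X^2$ contribution carries an extra $[n]^4$, so $[n]^2X^2$ is a multiple of $[n]^6$, again divisible by $\Phi_n(q)^6$. Discarding these vanishing terms, $[n]^2T^2$ collapses modulo $\Phi_n(q)^6$ to precisely the right-hand side of \eqref{eq:thm-a}.

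The main obstacle is supplying the input \eqref{wei-a} at this sharp level; the abstract signals a self-contained derivation via the creative microscoping method. I would introduce parameters $a,b$ so that the $a,b$-deformed single sum is a terminating very-well-poised $_8\phi_7$ series, evaluable by Jackson's summation together with an $_8\phi_7$ transformation; the three specializations $a=q^{-n}$, $a=q^n$, $a=b=1$ (and the corresponding $b$-specializations) produce partial $q$-congruences on distinct pairwise coprime polynomial factors, which then have to be glued by the \emph{new form of the Chinese remainder theorem for coprime polynomials} promised in the abstract. The delicate piece will be constructing the \emph{new} auxiliary $q$-congruence relations (replacing \eqref{relations-a}--\eqref{relations-b}) that are sharp enough to yield the $\Phi_n(q)^5$-modulus on the single sum; any loss here would propagate, after squaring, to a weakening of the $\Phi_n(q)^6$ modulus in \eqref{eq:thm-a}.
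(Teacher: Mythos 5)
Your proposal does not prove the statement at hand. The statement is Lemma \ref{lemma-a} itself---the elementary combinatorial identity collapsing a simplex-indexed sum into a $t$-th power---whereas what you have written is a proof sketch of Theorem \ref{thm-a}, in which Lemma \ref{lemma-a} is invoked as a black box in your very first step (``Lemma \ref{lemma-a} with $m=3$, $t=2$ applied to the truncated sequence produces\dots''). Nothing in your argument establishes the identity
$\sum_{k_1+\cdots+k_t\le n-1}\lambda(k_1)\cdots\lambda(k_t)=\bigl(\sum_{k=0}^{(n-1)/m}\lambda(k)\bigr)^{t}$;
you only check (correctly, but irrelevantly for this task) that the sequence $A_q(k)$ satisfies the lemma's vanishing hypothesis up to a congruence modulo $\Phi_n(q)^6$. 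Note that the paper itself does not reprove the lemma either: it attributes it to El Bachraoui \cite{Mohamed-a} (see also \cite{WY-a}).

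The missing argument is short, and here is the idea you would need. In any term of the left-hand side one has $0\le k_i\le k_1+\cdots+k_t\le n-1$ for each $i$, so by the hypothesis $\lambda(k)=0$ for $(n+m-1)/m\le k\le n-1$ (where $(n+m-1)/m=(n-1)/m+1$ is an integer because $n\equiv1\pmod m$), a term survives only if every $k_i\le(n-1)/m$. Conversely, if every $k_i\le(n-1)/m$, then $k_1+\cdots+k_t\le t\,(n-1)/m\le m\,(n-1)/m=n-1$ since $t\le m$, so the simplex constraint is automatic. Hence the sum over the simplex equals the sum over the cube $\{0,1,\dots,(n-1)/m\}^{t}$, which factors as the $t$-th power. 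Observe that both hypotheses $t\le m$ and $n\equiv1\pmod m$ are essential here, yet neither plays any role in your write-up---a further sign that the wrong statement was targeted. (As an aside, even read as an attempt at Theorem \ref{thm-a}, your route of squaring the single-sum congruence \eqref{wei-a} differs from the paper's, which never invokes \eqref{wei-a} but instead works directly with a two-parameter deformation of the double sum and glues congruences via a Chinese remainder argument; your reduction of \eqref{eq:thm-a} to \eqref{wei-a} does appear sound, but it is not a proof of Lemma \ref{lemma-a}.)
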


\begin{lemma}\label{lemma-b}
For the three polynomials $(a-q^n)(1-aq^n)$, $(b-q^n)(1-bcq^n)$, and
$(c-q^n)$ which are relatively prime to one another, there holds
\begin{align*}
&\frac{(b-q^{n})(1-bcq^{n})(c-q^{n})(x-yq^{n})}{(a-b)(a-c)(a-bc)(1-ab)(1-ac)(1-abc)}\equiv1\,\pmod{(a-q^{n})(1-aq^{n})},
\\[5pt]
&\frac{(a-q^{n})(1-aq^{n})(c-q^{n})(u-vq^{n})}{(a-b)(b-c)(a-bc)(1-ab)(1-abc)(1-bc^2)}\equiv1\pmod{(b-q^{n})(1-bcq^{n})},
\\[5pt]
&\qquad\qquad\frac{(a-q^{n})(1-aq^{n})(b-q^{n})(1-bcq^{n})}{(a-c)(b-c)(1-ac)(1-bc^2)}\equiv1\pmod{(c-q^n)},
\end{align*}
where
\begin{align*}
&x:=x(a,b,c)=a(1+a^2+a^4)(1+b^2c+bc^2)-a^2(1+a^2)(b+c+bc+b^2c^2)
\\&\qquad\qquad\qquad\:\:\,\,
-bc(1-a^3+a^6),
\\[5pt]
&y:=y(a,b,c)=a^2(1+a^2)(1+b^2c+bc^2)-a^3(b+c+bc+b^2c^2)-abc(1+a^4),
\end{align*}
\begin{align*}
&u:=u(a,b,c)=-a\{1+bc(b-c+bc+b^3c-b^2c^2+b^3c^2+b^5c^2-b^2c^3-b^4c^3)\}
\\[5pt]&\qquad\qquad\qquad\:\:\,\,
+bc(1+a^2)(1+b^2c-bc^2+b^4c^2-b^3c^3),
\\[5pt]
&v:=v(a,b,c)=-abc(1+b^2c-bc^2+b^2c^2+b^4c^2-b^3c^3)
\\[5pt]&\qquad\qquad\qquad\:\:\,
+b^2c^2(1+a^2)(1+b^2c-bc^2).
\end{align*}
\end{lemma}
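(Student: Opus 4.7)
My plan is to verify each of the three congruences by treating $q^n$ as an indeterminate over $\mathbb{Q}(a,b,c)$ and evaluating the left-hand side at the roots of the corresponding modulus. The moduli $(a-q^n)(1-aq^n)$, $(b-q^n)(1-bcq^n)$, $(c-q^n)$ have roots $\{a,1/a\}$, $\{b,1/(bc)\}$, $\{c\}$ respectively; these six values are pairwise distinct for generic $a,b,c$, so the moduli are pairwise coprime. A congruence $f(q^n)\equiv 1\pmod{g(q^n)}$ then reduces to verifying $f(\alpha)=1$ at every root $\alpha$ of $g$.

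For the first congruence, substituting $q^n=a$ gives signs from $(b-a)$ and $(c-a)$ whose product is $+1$, leaving $(a-b)(a-c)(1-abc)(x-ay)$ in the numerator. Dividing by the full denominator reduces the congruence to the polynomial identity
$$x-ay=(a-bc)(1-ab)(1-ac).$$
Substituting $q^n=1/a$ and clearing a common factor of $a^4$ from the numerator reduces the congruence to
$$ax-y=a^4(a-b)(a-c)(1-abc).$$
Both identities are verified by expanding the explicit $x(a,b,c),y(a,b,c)$ in the statement and collecting like terms in $a$; the specific combinations $(1+a^2+a^4)$ and $(1+a^2)$ entering $x$ are precisely engineered so that $x-ay$ collapses to the required degree-$4$ factorization.

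The second congruence is handled by substituting $q^n=b$ and $q^n=1/(bc)$; each yields an analogous polynomial identity in $a,b,c$ involving $u(a,b,c),v(a,b,c)$, verified by direct (if lengthier) expansion. The third congruence is immediate: its modulus is linear, and substituting $q^n=c$ produces the term-by-term cancellation $(a-c)(1-ac)(b-c)(1-bc^2)/[(a-c)(b-c)(1-ac)(1-bc^2)]=1$.

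The main obstacle is the second congruence. The mixed modulus $(b-q^n)(1-bcq^n)$ entangles $b$ with $c$, so the two substitutions $q^n=b$ and $q^n=1/(bc)$ interact asymmetrically with the denominator $(a-b)(b-c)(a-bc)(1-ab)(1-abc)(1-bc^2)$, and the resulting polynomial identities for $u,v$ are the intricate ones one must check by hand. The creative part of the lemma---the choice of pairing $(b-q^n)(1-bcq^n)$ rather than $(b-q^n)(1-bq^n)$, along with the explicit $u,v$ that realize the corresponding linear correction---is what makes Lemma \ref{lemma-b} a genuine refinement of the older symmetric relations \eqref{relations-a}--\eqref{relations-b}.
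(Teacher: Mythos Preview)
The paper states Lemma~\ref{lemma-b} without proof; it is presented as a set of identities to be used, with the verification left implicit. Your approach---treating $q^n$ as an indeterminate and checking that the left side equals $1$ at each root of the modulus---is exactly the natural way to certify such congruences, and it is correct. Since in every case the denominator is independent of $q^n$, the left side is a genuine polynomial in $q^n$, so vanishing of $\text{LHS}-1$ at the (generically distinct) roots of the modulus is equivalent to the claimed divisibility.

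Your reductions for the first congruence are right: at $q^n=a$ one obtains $x-ay=(a-bc)(1-ab)(1-ac)$, and at $q^n=1/a$ one obtains $ax-y=a^4(a-b)(a-c)(1-abc)$; both follow by straightforward expansion of the stated $x,y$. The third congruence is indeed immediate after substituting $q^n=c$. For the second congruence your plan is correct as well; the substitutions $q^n=b$ and $q^n=1/(bc)$ reduce it to two polynomial identities in $a,b,c$ that the explicit $u,v$ are designed to satisfy. The only thing missing from your write-up is the actual display (or at least one worked case) of those two identities for $u,v$; since the paper itself omits the whole computation, supplying them would make your proof strictly more complete than what appears there.
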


Now we start to supply a  parametric generalization of Theorem
\ref{thm-a}.

\begin{theorem}\label{thm-c}
Let $n$ be a positive integer subject to $n\equiv1\pmod{3}$ and
$$B_q(k)=[6k+1]\frac{(aq,q/a,bq,q/b;q^3)_k(q;q^3)_k^2}{(q^3/a,aq^3,q^{3}/b,bq^3;q^3)_k(q^3;q^3)_k^2}q^{3k}.$$
 Then for $i,j\geq0$, modulo $\Phi_n(q)^2(1-aq^n)(a-q^n)(1-bq^n)(b-q^n)$,
\begin{align}
&\sum_{i+j\leq n-1}B_q(i)B_{q}(j)
\notag\\
  &\quad\equiv[n]^2\frac{(1-aq^n)(a-q^n)(-1-b^2+bq^n)}{(b-a)(1-ba)}\frac{(aq^2,q^2/a,q^2;q^3)_{(n-1)/3}^2}{(q^3/a,aq^3,q^3;q^3)_{(n-1)/3}^2}
\notag\\
&\qquad+[n]^2\frac{(1-bq^n)(b-q^n)(-1-a^2+aq^n)}{(a-b)(1-ab)}\frac{(bq^2,q^2/b,q^2;q^3)_{(n-1)/3}^2}{(q^3/b,bq^3,q^3;q^3)_{(n-1)/3}^2}.
\label{wei-aa}
\end{align}
\end{theorem}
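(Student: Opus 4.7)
My plan is to verify \eqref{wei-aa} modulo each pairwise coprime piece of the claimed modulus and then stitch them together via the Chinese remainder theorem for coprime polynomials, with \eqref{relations-a} and \eqref{relations-b} supplying the CRT basis. Write
\[
E_a:=\frac{(1-bq^n)(b-q^n)(-1-a^2+aq^n)}{(a-b)(1-ab)},\quad E_b:=\frac{(1-aq^n)(a-q^n)(-1-b^2+bq^n)}{(b-a)(1-ba)},
\]
\[
F_x(q):=\frac{(xq^2,q^2/x,q^2;q^3)_{(n-1)/3}}{(q^3/x,xq^3,q^3;q^3)_{(n-1)/3}},\qquad T:=(1-aq^n)(a-q^n)(1-bq^n)(b-q^n);
\]
by \eqref{relations-a} and \eqref{relations-b}, $E_a,E_b$ are orthogonal CRT basis elements modulo $T$. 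Each of the six numerator Pochhammer factors of $B_q(k)$ supplies one of the five basic divisors of $\Phi_n(q)^2T$: once $k\geq(n+2)/3$ the factors $(aq;q^3)_k$, $(q/a;q^3)_k$, $(bq;q^3)_k$, $(q/b;q^3)_k$, $(q;q^3)_k^2$ become divisible by $(1-aq^n)$, $(a-q^n)$, $(1-bq^n)$, $(b-q^n)$, $\Phi_n(q)^2$ respectively, so $B_q(k)\equiv0\pmod{\Phi_n(q)^2T}$ on that range, and Lemma \ref{lemma-a} (with $m=3$, $t=2$) then yields
\[\sum_{i+j\leq n-1}B_q(i)B_q(j)\equiv S^2\pmod{\Phi_n(q)^2T},\qquad S:=\sum_{k=0}^{(n-1)/3}B_q(k).\]

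The central task is to establish the single-sum identity
\[S\equiv[n]\bigl(E_aF_b(q)+E_bF_a(q)\bigr)\pmod{\Phi_n(q)\cdot T},\]
which I would check modulo each coprime factor of $\Phi_n(q)\cdot T$. Modulo $(1-aq^n)$ the claim collapses (via \eqref{relations-a}) to $S\equiv[n]F_b(q)$; substituting $a=q^{-n}$ makes $(q^{1-n};q^3)_k$ terminate the sum at $k=(n-1)/3$, and the resulting terminating very-well-poised $_9W_8$, which inherits a doubly-repeated parameter $q$ from $(q;q^3)_k^2$, is reduced to a standard $_8W_7$ via the two-term transformation between two $_8\phi_7$ series and then closed to $[n]F_b(q)$ by Jackson's $_8\phi_7$ summation formula. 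The factor $(a-q^n)$ is handled by the $a\leftrightarrow 1/a$ symmetry of $B_q(k)$, and the two $b$-factors follow symmetrically. Modulo $\Phi_n(q)$, the right-hand side vanishes via the $[n]$ prefactor, and the left-hand side vanishes by a direct argument at each primitive $n$-th root of unity (for instance via the reflection $k\mapsto(n-1)/3-k$ on the summand).

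Squaring finishes the proof. Writing $S=X+\Phi_n(q)TQ$ with $X=[n](E_aF_b+E_bF_a)$, the divisibility $\Phi_n(q)\mid X$ makes every cross term in $S^2$ a multiple of $\Phi_n(q)^2T$, so $S^2\equiv X^2\pmod{\Phi_n(q)^2T}$. By the CRT orthogonality $E_aE_b\equiv 0$, $E_a^2\equiv E_a$, $E_b^2\equiv E_b$ modulo $T$, together with the $\Phi_n(q)^2$ factor inside $[n]^2$, the square reduces to $[n]^2(E_aF_b^2+E_bF_a^2)$, which is exactly the right-hand side of \eqref{wei-aa}.

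The main obstacle is the closed-form evaluation of $S$ at the microscope $a=q^{-n}$: the resulting terminating $_9W_8$ lies one parameter past Jackson's summation formula, and coaxing it into a summable $_8\phi_7$ via the two-term $_8\phi_7$-to-$_8\phi_7$ transformation --- then recognizing the resulting Pochhammer product as the compact form $[n]F_b(q)$ once $q^n$ is replaced by $1/a$ --- is where essentially all of the technical work concentrates; the rest (El Bachraoui reduction, $\Phi_n(q)$-vanishing, CRT assembly) is structural bookkeeping.
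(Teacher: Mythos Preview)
Your argument is correct, but it takes a genuinely different route from the paper. The paper introduces an \emph{extra} parameter $c$, works with the three-parameter summand $\beta_q(k)$, and combines the four congruences \eqref{wei-bb}--\eqref{wei-ee} via the \emph{new} CRT basis of Lemma~\ref{lemma-b} for the coprime triple $(a-q^n)(1-aq^n)$, $(b-q^n)(1-bcq^n)$, $(c-q^n)$; only afterwards does it set $c=1$, where the factor $(1-q^n)^2$ inside $(q^4;q^3)_{(n-1)/3}^2$ upgrades the modulus from $\Phi_n(q)$ to $\Phi_n(q)^2$. By contrast you never leave the two-parameter world: you assemble the single sum $S$ modulo $\Phi_n(q)T$ using only the classical idempotents \eqref{relations-a}--\eqref{relations-b}, and your lift from $\Phi_n(q)T$ to $\Phi_n(q)^2T$ comes instead from the observation that $\Phi_n(q)\mid S$ and $\Phi_n(q)\mid X$, so $(S-X)(S+X)\in\Phi_n(q)^2T$, followed by the idempotency relations $E_a^2\equiv E_a$, $E_b^2\equiv E_b$, $E_aE_b\equiv 0\pmod T$. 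This is more economical and sidesteps Lemma~\ref{lemma-b} entirely; the paper's extra parameter buys a uniform CRT framework that also feeds directly into the triple-sum Theorem~\ref{thm-d}, but for the present statement your route is shorter.

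Two small corrections to your write-up. First, the microscoped sum at $a=q^{-n}$ is already a balanced terminating ${}_8W_7$ (the leading parameter $q$ absorbs one of the two copies of $(q;q^3)_k$, leaving five free numerator parameters $q^{1-n},q^{1+n},bq,q/b,q$), so Jackson's summation \eqref{Jackson} applies \emph{directly} and no preliminary ${}_8\phi_7$-to-${}_8\phi_7$ transformation is needed; the product it yields simplifies via $(q^4;q^3)_{(n-1)/3}/(q;q^3)_{(n-1)/3}=[n]$ to $[n]F_b(q)$. Second, the $\Phi_n(q)$-vanishing of $S$ does follow from the reflection $k\mapsto(n-1)/3-k$, but since you already invoke the ${}_8\phi_7$ transformation, the cleanest justification is the $c=1$ case of \eqref{base-A} read modulo $\Phi_n(q)$.
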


\begin{proof}
Define the basic hypergeometric series (cf. \cite{Gasper}) to be
$$
_{r}\phi_{s}\left[\begin{array}{c}
a_1,a_2,\ldots,a_{r}\\
b_1,b_2,\ldots,b_{s}
\end{array};q,\, z
\right] =\sum_{k=0}^{\infty}\frac{(a_1,a_2,\ldots, a_{r};q)_k}
{(q,b_1,b_2,\ldots,b_{s};q)_k}\bigg\{(-1)^kq^{\binom{k}{2}}\bigg\}^{1+s-r}z^k.
$$
Recall the transformation formula between two $ _{8}\phi_{7}$ series
(cf. \cite[Appendix III. 23]{Gasper}):
\begin{align*}
& _{8}\phi_{7}\!\left[\begin{array}{cccccccc}
a,& qa^{\frac{1}{2}},& -qa^{\frac{1}{2}}, & b,    & c,    & d,    & e,    & f \\
  & a^{\frac{1}{2}}, & -a^{\frac{1}{2}},  & aq/b, & aq/c, & aq/d, & aq/e, &
  aq/f
\end{array};q,\, \frac{a^2q^{2}}{bcdef}
\right] \\[5pt]
&\quad =\frac{(aq,aq/ef,\lambda q/e,\lambda q/f;q)_{\infty}}
{(aq/e,aq/f,\lambda q,\lambda q/ef;q)_{\infty}}
\\[5pt]&\quad\times
{_{8}\phi_{7}}\!\left[\begin{array}{cccccccc}
\lambda,& q\lambda^{\frac{1}{2}},& -q\lambda^{\frac{1}{2}}, & \lambda b/a,    &\lambda c/a,    &\lambda d/a,    & e,    & f \\
  & \lambda^{\frac{1}{2}}, & -\lambda^{\frac{1}{2}},  & aq/b, & aq/c, & aq/d, & \lambda q/e, &
  \lambda q/f
\end{array};q,\, \frac{aq}{ef}
\right],
\end{align*}
where $\lambda=a^2q/bcd$ and $\max\{|a^2q^2/bcdef|,|aq/ef|\}<1$.
Performing the replacements $a\mapsto q^{1-n}$, $b\mapsto aq$,
$c\mapsto q/a$, $d\mapsto q/b$, $e\mapsto q/c$, $f\to bcq$,
$q\mapsto q^3$ in the last equation, we have
\begin{align}
\sum_{k=0}^{(n-1)/3}[6k+1-n]\frac{(aq,q/a,q/b,q/c,bcq,q^{1-n};q^3)_k}{(q^{3-n}/a,aq^{3-n},bq^{3-n},cq^{3-n},q^{3-n}/bc,q^3;q^3)_k}q^{(3-2n)k}
=0.\label{base-A}
\end{align}
The use of the $m=3,t=2$ case of Lemma \ref{lemma-a} leads us to
\begin{align*}
&\sum_{i+j\leq n-1}[6i+1-n]
\frac{(aq,q/a,q/b,q/c,bcq,q^{1-n};q^3)_i}{(q^{3-n}/a,aq^{3-n},bq^{3-n},cq^{3-n},q^{3-n}/bc,q^3;q^3)_i}q^{(3-2n)i}
\\[5pt]
&\qquad\times
[6j+1-n]\frac{(aq,q/a,q/b,q/c,bcq,q^{1-n};q^3)_j}{(q^{3-n}/a,aq^{3-n},bq^{3-n},cq^{3-n},q^{3-n}/bc,q^3;q^3)_j}q^{(3-2n)j}
\\[5pt]&\quad
=0.
\end{align*}
Observing that $q^n\equiv 1\pmod{\Phi_n(q)}$, it is easy to realize
that
\begin{equation}\label{wei-bb}
\sum_{i+j\leq n-1}\beta_q(i)\beta_q(j)\equiv 0\pmod{\Phi_n(q)},
\end{equation}
where
\begin{align*}
 \beta_q(k)=[6k+1]
\frac{(aq,q/a,q/b,q/c,bcq,q;q^3)_k}{(q^{3}/a,aq^{3},bq^{3},cq^{3},q^{3}/bc,q^3;q^3)_k}q^{3k}.
\end{align*}

Jackon's $_8\phi_7$ summation formula (cf. \cite[Appendix (II.
22)]{Gasper}) can be stated as
\begin{align}
& _{8}\phi_{7}\!\left[\begin{array}{cccccccc}
a,& qa^{\frac{1}{2}},& -qa^{\frac{1}{2}}, & b,    & c,    & d,    & e,    & q^{-n} \\
  & a^{\frac{1}{2}}, & -a^{\frac{1}{2}},  & aq/b, & aq/c, & aq/d, & aq/e, & aq^{n+1}
\end{array};q,q
\right] \notag\\[5pt]
&\quad
=\frac{(aq,aq/bc,aq/bd,aq/cd;q)_{n}}{(aq/b,aq/c,aq/d,aq/bcd;q)_{n}},
\label{Jackson}
\end{align}
where $a^2q=bcdeq^{-n}$. Employing the substitutions $a\mapsto q$,
$b\mapsto q/b$, $c\mapsto q/c$, $d\to bcq$, $e\mapsto q^{1+n}$,
$n\to (n-1)/3$, $q\mapsto q^3$ in \eqref{Jackson}, we find
\begin{equation}\label{base-B}
\sum_{k=0}^{(n-1)/3}\tilde{\beta}_q(k)=\frac{(q^4,q^2/b,q^2/c,bcq^2;q^3)_{(n-1)/3}}{(q,bq^3,cq^3,q^3/bc;q^3)_{(n-1)/3}},
\end{equation}
where $\tilde{\beta}_q(k)$ is the $a=q^n$ or $a=q^{-n}$ case of
$\beta_q(k)$. The utilization of the $m=3,t=2$ case of Lemma
\ref{lemma-a} yields
\begin{align*}
\sum_{i+j\leq n-1}\tilde{\beta}_q(i)\tilde{\beta}_{q}(j)
=\frac{(q^4,q^2/b,q^2/c,bcq^2;q^3)_{(n-1)/3}^2}{(q,bq^3,cq^3,q^3/bc;q^3)_{(n-1)/3}^2}.
\end{align*}
Then there is the $q$-congruence: modulo $(1-aq^n)(a-q^n)$,
\begin{align}
\sum_{i+j\leq n-1}\beta_q(i)\beta_{q}(j)
\equiv\frac{(q^4,q^2/b,q^2/c,bcq^2;q^3)_{(n-1)/3}^2}{(q,bq^3,cq^3,q^3/bc;q^3)_{(n-1)/3}^2}.
\label{wei-cc}
\end{align}

Performing the replacements $a\mapsto q$, $b\mapsto aq$, $c\mapsto
q/a$, $d\to q/c$, $e\mapsto cq^{1+n}$, $n\to (n-1)/3$, $q\mapsto
q^3$ in \eqref{Jackson}, we discover
\begin{equation}\label{base-C}
\sum_{k=0}^{(n-1)/3}\bar{\beta}_q(k)=\frac{(q^4,q^2,acq^2,cq^2/a;q^3)_{(n-1)/3}}{(cq,cq^3,aq^3,q^3/a;q^3)_{(n-1)/3}},
\end{equation}
where $\bar{\beta}_q(k)$ stands for the $b=q^n$ or $bc=q^{-n}$ case
of $\beta_q(k)$. The use of the $m=3,t=2$ case of Lemma
\ref{lemma-a} produces
\begin{align*}
\sum_{i+j\leq n-1}\bar{\beta}_q(i)\bar{\beta}_{q}(j)
=\frac{(q^4,q^2,acq^2,cq^2/a;q^3)_{(n-1)/3}^2}{(cq,cq^3,aq^3,q^3/a;q^3)_{(n-1)/3}^2}.
\end{align*}
So we obtain the $q$-congruence: modulo $(b-q^n)(1-bcq^n)$,
\begin{align}
\sum_{i+j\leq n-1}\beta_q(i)\beta_{q}(j)
\equiv\frac{(q^4,q^2,acq^2,cq^2/a;q^3)_{(n-1)/3}^2}{(cq,cq^3,aq^3,q^3/a;q^3)_{(n-1)/3}^2}.
\label{wei-dd}
\end{align}

Interchanging the parameters $b$ and $c$ in \eqref{wei-dd}, we can
deduce the $q$-congruence: modulo $(c-q^n)$,
\begin{align}
\sum_{i+j\leq n-1}\beta_q(i)\beta_{q}(j)
\equiv\frac{(q^4,q^2,abq^2,bq^2/a;q^3)_{(n-1)/3}^2}{(bq,bq^3,aq^3,q^3/a;q^3)_{(n-1)/3}^2}.
\label{wei-ee}
\end{align}

It is evident that the polynomials $(a-q^n)(1-aq^n)$,
$(b-q^n)(1-bcq^n)$, $(c-q^n)$, and $\Phi_n(q)$ are relatively prime
to one another. In view of Lemma \ref{lemma-b} and the Chinese
remainder theorem for coprime polynomials, we derive the following
result from \eqref{wei-bb}, \eqref{wei-cc}, \eqref{wei-dd}, and
\eqref{wei-ee}: modulo
$\Phi_n(q)(a-q^n)(1-aq^n)(b-q^n)(1-bcq^n)(c-q^n)$,
\begin{align}
&\sum_{i+j\leq n-1}\beta_q(i)\beta_{q}(j)
\notag\\
  &\quad\equiv\frac{(b-q^{n})(1-bcq^{n})(c-q^{n})(x-yq^{n})}{(a-b)(a-c)(a-bc)(1-ab)(1-ac)(1-abc)}
\frac{(q^4,q^2/b,q^2/c,bcq^2;q^3)_{(n-1)/3}^2}{(q,bq^3,cq^3,q^3/bc;q^3)_{(n-1)/3}^2}
\notag\\[5pt]
&\qquad+\frac{(a-q^{n})(1-aq^{n})(c-q^{n})(u-vq^{n})}{(a-b)(b-c)(a-bc)(1-ab)(1-abc)(1-bc^2)}
\frac{(q^4,q^2,acq^2,cq^2/a;q^3)_{(n-1)/3}^2}{(cq,cq^3,aq^3,q^3/a;q^3)_{(n-1)/3}^2}
\notag\\[5pt]
&\qquad+
\frac{(a-q^{n})(1-aq^{n})(b-q^{n})(1-bcq^{n})}{(a-c)(b-c)(1-ac)(1-bc^2)}
\frac{(q^4,q^2,abq^2,bq^2/a;q^3)_{(n-1)/3}^2}{(bq,bq^3,aq^3,q^3/a;q^3)_{(n-1)/3}^2}.
\label{wei-ef}
\end{align}
Noting that the factor $(1-q^n)^2$ appears in the expression
$(q^4;q^3)_{(n-1)/3}^2$, the $c=1$ case of \eqref{wei-ef} reads as
follows: modulo $\Phi_n(q)^2(a-q^n)(1-aq^n)(b-q^n)(1-bq^n)$,
\begin{align}
&\sum_{i+j\leq n-1}B_q(i)B_{q}(j)
\notag\\[5pt]
  &\quad\equiv\frac{(b-q^{n})(1-bq^{n})(1-q^{n})\{-1+a-a^2+a^3-a^4+a(1-a+a^2)q^{n}\}}{(1-a)^2(a-b)(1-ab)}
  \notag\\[5pt]
  &\qquad\times
\frac{(q^4,q^2,q^2/b,bq^2;q^3)_{(n-1)/3}^2}{(q,q^3,bq^3,q^3/b;q^3)_{(n-1)/3}^2}
\notag\\[5pt]
&\qquad+\frac{(a-q^{n})(1-aq^{n})(1-q^{n})\{-1+b-b^2+b^3-b^4+b(1-b+b^2)q^{n}\}}{(1-b)^2(b-a)(1-ba)}
 \notag\\[5pt]
  &\qquad\times
  \frac{(q^4,q^2,q^2/a,aq^2;q^3)_{(n-1)/3}^2}{(q,q^3,aq^3,q^3/a;q^3)_{(n-1)/3}^2}.
\label{wei-ff}
\end{align}
On the base of the following evaluations:
\begin{align}
&\frac{(1-q^{n})\{-1+a-a^2+a^3-a^4+a(1-a+a^2)q^{n}\}}{(1-a)^2(a-b)(1-ab)}
  \notag\\[5pt]
  &=
\frac{-(1-a)^2+(a-q^{n})(-1+a^2-a^3+aq^n-a^2q^n+a^3q^n)}{(1-a)^2(a-b)(1-ab)}
 \notag
 \end{align}
 \begin{align}
  &\equiv
\frac{-(1-a)^2-(1-a)^2a(a-q^{n})}{(1-a)^2(a-b)(1-ab)}
\notag\\[5pt]
  &=
\frac{-1-a^2+aq^n}{(a-b)(1-ab)}\pmod{(a-q^n)(1-aq^n)},
\label{base-D}
 \end{align}
 \begin{align}
&\frac{(1-q^{n})\{-1+b-b^2+b^3-b^4+b(1-b+b^2)q^{n}\}}{(1-b)^2(b-a)(1-ba)}
  \notag\\
  &\equiv
\frac{-1-b^2+bq^n}{(b-a)(1-ba)}\pmod{(b-q^n)(1-bq^n)},
\label{base-E}
\end{align}
we can rewrite \eqref{wei-ff} as \eqref{wei-aa} to complete the
proof.
\end{proof}

At present, we are ready to prove Theorem \ref{thm-a}.

\begin{proof}[Proof of Theorem \ref{thm-a}]
Selecting $b=1$ in Theorem \ref{thm-c}, we catch hold of  the
conclusion: modulo $\Phi_n(q)^4(1-aq^n)(a-q^n)$,
\begin{align}
&\sum_{i+j\leq n-1}B^{*}_q(i)B^{*}_{q}(j)
\notag\\[5pt]
  &\notag\quad\equiv
[n]^2(1-q^n)^2\frac{(q^2;q^3)_{(n-1)/3}^6}{(q^3;q^3)_{(n-1)/3}^6}
\notag\\[5pt]
  &\qquad
+[n]^2(2-q^n)
\notag\\[5pt]
&\qquad\times\bigg\{\frac{a(1-q^n)^2}{(1-a)^2}\frac{(q^2;q^3)_{(n-1)/3}^6}{(q^3;q^3)_{(n-1)/3}^6}
-\frac{(1-aq^n)(a-q^n)}{(1-a)^2}\frac{(aq^2,q^2/a,q^2;q^3)_{(n-1)/3}^2}{(q^3/a,aq^3,q^3;q^3)_{(n-1)/3}^2}\bigg\},
\label{wei-gg}
\end{align}
where $B^{*}_q(k)$ denotes the $b=1$ case of $B_q(k)$. Through the
L'H\^{o}pital rule, we get
\begin{align*}
&\lim_{a\to1}\bigg\{\frac{a(1-q^{n})^2}{(1-a)^2}\frac{(q^2;q^3)_{(n-1)/3}^4}{(q^3;q^3)_{(n-1)/3}^4}
-\frac{(1-aq^{n})(a-q^{n})}{(1-a)^2}\frac{(aq^2,q^2/a;q^3)_{(n-1)/3}^2}{(q^3/a,aq^3;q^3)_{(n-1)/3}^2}\bigg\}
\notag\\[5pt]
&\quad=\frac{(q^2;q^3)_{(n-1)/3}^4}{(q^3;q^3)_{(n-1)/3}^4}
\bigg\{q^{n}+2[n]^2\sum_{r=1}^{(n-1)/3}\bigg(\frac{q^{3rr-1}}{[3r-1]^2}-\frac{q^{3r}}{[3r]^2}\bigg)\bigg\}.
\end{align*}
Letting $a\to1$ in \eqref{wei-gg} and utilizing this limit, we prove
\eqref{eq:thm-a} in the end.
\end{proof}

\section{Proof of Theorem \ref{thm-aa}}

Firstly, we shall establish  the following  parametric
generalization of Theorem \ref{thm-aa}.

\begin{theorem}\label{thm-d}
Let $n$ be a positive integer subject to $n\equiv1\pmod{3}$. Then
for $i,j,k\geq0$, modulo
$\Phi_n(q)^2(1-aq^n)(a-q^n)(1-bq^n)(b-q^n)$,
\begin{align}
&\sum_{i+j+k\leq n-1}B_q(i)B_q(j)B_{q}(k)
\notag\\
  &\quad\equiv[n]^3\frac{(1-aq^n)(a-q^n)(-1-b^2+bq^n)}{(b-a)(1-ba)}\frac{(aq^2,q^2/a,q^2;q^3)_{(n-1)/3}^3}{(q^3/a,aq^3,q^3;q^3)_{(n-1)/3}^3}
\notag\\
&\qquad+[n]^3\frac{(1-bq^n)(b-q^n)(-1-a^2+aq^n)}{(a-b)(1-ab)}\frac{(bq^2,q^2/b,q^2;q^3)_{(n-1)/3}^3}{(q^3/b,bq^3,q^3;q^3)_{(n-1)/3}^3},
\label{wei-aaa}
\end{align}
where $B_{q}(k)$ has been given in Theorem \ref{thm-c}.
\end{theorem}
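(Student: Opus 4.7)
The proof will follow the template of Theorem \ref{thm-c} step for step, with every invocation of the $m=3$, $t=2$ case of Lemma \ref{lemma-a} replaced by its $m=3$, $t=3$ case. First, applying the $t=3$ version of Lemma \ref{lemma-a} to the identity \eqref{base-A} yields
\begin{equation*}
\sum_{i+j+k\leq n-1}\beta_q(i)\beta_q(j)\beta_q(k)\equiv 0\pmod{\Phi_n(q)}.
\end{equation*}
Next, using the closed-form evaluations \eqref{base-B} and \eqref{base-C} produced by Jackson's $_{8}\phi_{7}$ summation and raising them to the third power through the $t=3$ instance of Lemma \ref{lemma-a}, I would obtain the partial congruences
\begin{align*}
\sum_{i+j+k\leq n-1}\beta_q(i)\beta_q(j)\beta_q(k)
&\equiv \frac{(q^4,q^2/b,q^2/c,bcq^2;q^3)_{(n-1)/3}^3}{(q,bq^3,cq^3,q^3/bc;q^3)_{(n-1)/3}^3}\pmod{(a-q^n)(1-aq^n)},\\
\sum_{i+j+k\leq n-1}\beta_q(i)\beta_q(j)\beta_q(k)
&\equiv \frac{(q^4,q^2,acq^2,cq^2/a;q^3)_{(n-1)/3}^3}{(cq,cq^3,aq^3,q^3/a;q^3)_{(n-1)/3}^3}\pmod{(b-q^n)(1-bcq^n)},
\end{align*}
and, by interchanging $b$ and $c$ in the second line, the analogous congruence modulo $(c-q^n)$.

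Since the polynomials $\Phi_n(q)$, $(a-q^n)(1-aq^n)$, $(b-q^n)(1-bcq^n)$, $(c-q^n)$ remain pairwise coprime, I would then combine these four partial congruences by means of Lemma \ref{lemma-b} and the Chinese remainder theorem for coprime polynomials into a single congruence modulo $\Phi_n(q)(a-q^n)(1-aq^n)(b-q^n)(1-bcq^n)(c-q^n)$; this is the triple-sum analogue of \eqref{wei-ef}, with every $q$-shifted factorial squared on the right-hand side upgraded to a cube. Specializing $c=1$ converts $(c-q^n)$ into $1-q^n$, which carries one extra copy of $\Phi_n(q)$, so the modulus improves to $\Phi_n(q)^2(a-q^n)(1-aq^n)(b-q^n)(1-bq^n)$. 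The right-hand side then reduces via the identities \eqref{base-D} and \eqref{base-E} (used exactly as in the proof of Theorem \ref{thm-c}) into the announced form \eqref{wei-aaa}, the prefactor $[n]^3$ arising in place of $[n]^2$ because one additional copy of $1-q^n$ is now available from the cubed factor $(q^4;q^3)_{(n-1)/3}^3$.

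The main obstacle is the divisibility bookkeeping after the $c=1$ specialization: one must check, term by term, that the cube $(q^4;q^3)_{(n-1)/3}^3$ together with the $(c-q^n)$ appearing in the numerators of Lemma \ref{lemma-b} contributes enough factors of $1-q^n$ to both cancel the $(1-a)^2$ or $(1-b)^2$ that surface in the denominator at $c=1$ and still leave the full $\Phi_n(q)^2$ visible in the modulus. Once this accounting is in place — and it parallels the corresponding step in the proof of Theorem \ref{thm-c} — the simplifications \eqref{base-D} and \eqref{base-E} can be applied unchanged, and no machinery beyond the $t=3$ version of Lemma \ref{lemma-a} is required to close the argument.
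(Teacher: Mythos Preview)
Your proposal is correct and matches the paper's own proof essentially step for step: the paper likewise invokes the $m=t=3$ case of Lemma~\ref{lemma-a} on \eqref{base-A}, \eqref{base-B}, \eqref{base-C}, combines the resulting four partial congruences through Lemma~\ref{lemma-b} and the Chinese remainder theorem into a triple-sum analogue of \eqref{wei-ef}, specializes $c=1$, and then applies \eqref{base-D} and \eqref{base-E} unchanged to reach \eqref{wei-aaa}. The divisibility bookkeeping you flag as the main obstacle is handled in the paper exactly as in the proof of Theorem~\ref{thm-c}, with the extra factor of $[n]$ coming, as you say, from the cube $(q^4;q^3)_{(n-1)/3}^3$.
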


\begin{proof}
With the help of the $m=t=3$ case of Lemma \ref{lemma-a} and
\eqref{base-A}, it is routine to show that
\begin{equation}\label{wei-bbb}
\sum_{i+j+k\leq n-1}\beta_q(i)\beta_q(j)\beta_{q}(k)\equiv
0\pmod{\Phi_n(q)},
\end{equation}
where $\beta_{q}(k)$ has emerged in \eqref{wei-bb}.

In terms of the $m=t=3$ case of Lemma \ref{lemma-a} and
\eqref{base-B}, there holds
 $q$-congruence: modulo $(1-aq^n)(a-q^n)$,
\begin{align}
\sum_{i+j+k\leq n-1}\beta_q(i)\beta_q(j)\beta_{q}(k)
\equiv\frac{(q^4,q^2/b,q^2/c,bcq^2;q^3)_{(n-1)/3}^3}{(q,bq^3,cq^3,q^3/bc;q^3)_{(n-1)/3}^3}.
\label{wei-ccc}
\end{align}

Via the $m=t=3$ case of Lemma \ref{lemma-a} and \eqref{base-C},
there is
 $q$-congruence: modulo $(b-q^n)(1-bcq^n)$,
\begin{align}
\sum_{i+j+k\leq n-1}\beta_q(i)\beta_q(j)\beta_{q}(k)
\equiv\frac{(q^4,q^2,acq^2,cq^2/a;q^3)_{(n-1)/3}^3}{(cq,cq^3,aq^3,q^3/a;q^3)_{(n-1)/3}^3}.
\label{wei-ddd}
\end{align}

Interchanging the parameters $b$ and $c$ in \eqref{wei-ddd}, we can
deduce the $q$-congruence: modulo $(c-q^n)$,
\begin{align}
\sum_{i+j+k\leq n-1}\beta_q(i)\beta_q(j)\beta_{q}(k)
\equiv\frac{(q^4,q^2,abq^2,bq^2/a;q^3)_{(n-1)/3}^3}{(bq,bq^3,aq^3,q^3/a;q^3)_{(n-1)/3}^3}.
\label{wei-eee}
\end{align}

By means of Lemma \ref{lemma-b} and the Chinese remainder theorem
for coprime polynomials, we derive the following result from
\eqref{wei-bbb}-\eqref{wei-eee}: modulo
$\Phi_n(q)(a-q^n)(1-aq^n)(b-q^n)(1-bcq^n)(c-q^n)$,
\begin{align}
&\sum_{i+j+k\leq n-1}\beta_q(i)\beta_q(j)\beta_{q}(k)
\notag\\
  &\quad\equiv\frac{(b-q^{n})(1-bcq^{n})(c-q^{n})(x-yq^{n})}{(a-b)(a-c)(a-bc)(1-ab)(1-ac)(1-abc)}
\frac{(q^4,q^2/b,q^2/c,bcq^2;q^3)_{(n-1)/3}^3}{(q,bq^3,cq^3,q^3/bc;q^3)_{(n-1)/3}^3}
\notag
\end{align}
\begin{align}
&\qquad+\frac{(a-q^{n})(1-aq^{n})(c-q^{n})(u-vq^{n})}{(a-b)(b-c)(a-bc)(1-ab)(1-abc)(1-bc^2)}
\frac{(q^4,q^2,acq^2,cq^2/a;q^3)_{(n-1)/3}^3}{(cq,cq^3,aq^3,q^3/a;q^3)_{(n-1)/3}^3}
\notag\\[5pt]
&\qquad+
\frac{(a-q^{n})(1-aq^{n})(b-q^{n})(1-bcq^{n})}{(a-c)(b-c)(1-ac)(1-bc^2)}
\frac{(q^4,q^2,abq^2,bq^2/a;q^3)_{(n-1)/3}^3}{(bq,bq^3,aq^3,q^3/a;q^3)_{(n-1)/3}^3}.
\label{wei-fff}
\end{align}
The $c=1$ case of \eqref{wei-fff} can be simplified as follows:
modulo $\Phi_n(q)^2(a-q^n)(1-aq^n)(b-q^n)(1-bq^n)$,
\begin{align}
&\sum_{i+j+k\leq n-1}B_q(i)B_{q}(j)B_{q}(k)
\notag\\[5pt]
  &\quad\equiv\frac{(b-q^{n})(1-bq^{n})(1-q^{n})\{-1+a-a^2+a^3-a^4+a(1-a+a^2)q^{n}\}}{(1-a)^2(a-b)(1-ab)}
  \notag\\[5pt]
  &\qquad\times
\frac{(q^4,q^2,q^2/b,bq^2;q^3)_{(n-1)/3}^3}{(q,q^3,bq^3,q^3/b;q^3)_{(n-1)/3}^3}
\notag\\[5pt]
&\qquad+\frac{(a-q^{n})(1-aq^{n})(1-q^{n})\{-1+b-b^2+b^3-b^4+b(1-b+b^2)q^{n}\}}{(1-b)^2(b-a)(1-ba)}
 \notag\\[5pt]
  &\qquad\times
  \frac{(q^4,q^2,q^2/a,aq^2;q^3)_{(n-1)/3}^3}{(q,q^3,aq^3,q^3/a;q^3)_{(n-1)/3}^3}.
\label{wei-ggg}
\end{align}
In view of \eqref{base-D} and \eqref{base-E}, we may change
\eqref{wei-ggg} into \eqref{wei-aaa} to complete the proof.
\end{proof}

Secondly, we are going to prove Theorem \ref{thm-aa}.

\begin{proof}[Proof of Theorem \ref{thm-aa}]
Setting $b=1$ in Theorem \ref{thm-d}, we arrive at  the formula:
modulo $\Phi_n(q)^4(1-aq^n)(a-q^n)$,
\begin{align}
&\sum_{i+j+k\leq n-1}B^{*}_q(i)B^{*}_{q}(j)B^{*}_{q}(k)
\notag\\[5pt]
  &\notag\quad\equiv
[n]^3(1-q^n)^2\frac{(q^2;q^3)_{(n-1)/3}^9}{(q^3;q^3)_{(n-1)/3}^9}
\\[5pt]
  &\qquad
+[n]^3(2-q^n)
\notag\\[5pt]
&\qquad\times\bigg\{\frac{a(1-q^n)^2}{(1-a)^2}\frac{(q^2;q^3)_{(n-1)/3}^9}{(q^3;q^3)_{(n-1)/3}^9}
-\frac{(1-aq^n)(a-q^n)}{(1-a)^2}\frac{(aq^2,q^2/a,q^2;q^3)_{(n-1)/3}^3}{(q^3/a,aq^3,q^3;q^3)_{(n-1)/3}^3}\bigg\},
\label{wei-hhh}
\end{align}
where $B^{*}_q(k)$ is the $b=1$ case of $B_q(k)$. Through the
L'H\^{o}pital rule, we find
\begin{align*}
&\lim_{a\to1}\bigg\{\frac{a(1-q^{n})^2}{(1-a)^2}\frac{(q^2;q^3)_{(n-1)/3}^6}{(q^3;q^3)_{(n-1)/3}^6}
-\frac{(1-aq^{n})(a-q^{n})}{(1-a)^2}\frac{(aq^2,q^2/a;q^3)_{(n-1)/3}^3}{(q^3/a,aq^3;q^3)_{(n-1)/3}^3}\bigg\}
\end{align*}
\begin{align*}
&\quad=\frac{(q^2;q^3)_{(n-1)/3}^6}{(q^3;q^3)_{(n-1)/3}^6}
\bigg\{q^{n}+3[n]^2\sum_{r=1}^{(n-1)/3}\bigg(\frac{q^{3r-1}}{[3r-1]^2}-\frac{q^{3r}}{[3r]^2}\bigg)\bigg\}.
\end{align*}
Letting $a\to1$ in \eqref{wei-hhh} and drawing support from the
above limit, we obtain the consequence: modulo $\Phi_n(q)^6$,
\begin{align*}
\sum_{i+j+k\leq n-1}A_q(i)A_q(j)A_{q}(k)&\equiv
[n]^3\frac{(q^2;q^3)_{(n-1)/3}^9}{(q^3;q^3)_{(n-1)/3}^9}
\notag\\
&\quad\times\:
\bigg\{1+3[n]^2(2-q^n)\sum_{r=1}^{(n-1)/3}\bigg(\frac{q^{3r-1}}{[3r-1]^2}-\frac{q^{3r}}{[3r]^2}\bigg)\bigg\}.
\end{align*}
Noticing the fact $[n]^5(2-q^n)\equiv [n]^5\pmod{\Phi_n(q)^6}$, we
are led to \eqref{eq:thm-aa} finally.
\end{proof}

\textbf{Acknowledgments}\\

 The work is supported by Hainan Provincial Natural Science Foundation of China (No. 124RC511) and the National Natural Science Foundation of China (No. 12071103).

\end{document}